\newtheorem{theorem}{Theorem}[section]
\newtheorem{lemma}[theorem]{Lemma}
\theoremstyle{definition}
\newtheorem{definition}[theorem]{Definition}
\newtheorem{example}[theorem]{Example}
\newtheorem{remark}[theorem]{Remark}
\DeclareMathOperator{\reg}{reg}
\newcommand{\bbP}{\mathbb{P}}
\newcommand{\cA}{\mathcal{A}}
\newcommand{\cB}{\mathcal{B}}
\newcommand{\cC}{\mathcal{C}}
\newcommand{\cF}{\mathcal{F}}
\newcommand{\cI}{\mathcal{I}}
\newcommand{\cO}{\mathcal{O}}
\newcommand{\defining}[1]{\textbf{#1}}
\title[Regularity of bipartite subspace arrangements]
{Castelnuovo--Mumford regularity and arithmetic Cohen--Macaulayness of complete bipartite subspace arrangements}
\author{Zach Teitler}
 \email{zteitler@boisestate.edu}
 \address{Department of Mathematics \\ 1910 University Drive \\ Boise State University \\ Boise, ID 83725-1555 \\ USA}
\author{Douglas A. Torrance}
 \email{dtorrance@monmouthcollege.edu}
 \address{Department of Mathematics \\ University of Idaho \\ P.O.\ Box 441103 \\ Moscow, ID 83844-1103 \\ USA}
 \curraddr{Department of Mathematics and Computer Science \\ Monmouth College \\ 700 E Broadway \\ Monmouth, IL 61462-1963 \\ USA}
\date{\today}
\keywords{Castelnuovo--Mumford regularity, arithmetically Cohen--Macaulay, subspace arrangement}
\subjclass[2010]{14N20, 13D02, 13H10}
\begin{document}

\begin{abstract}
We give the Castelnuovo--Mumford regularity of arrangements of $(n-2)$-planes in $\bbP^n$
whose incidence graph is a sufficiently large complete bipartite graph,
and determine when such arrangements are arithmetically Cohen--Macaulay.
\end{abstract}

\maketitle

\section{Introduction}
A subspace arrangement $\cA = \{ L_1 , \dotsc , L_d \}$ is a finite collection of linear subspaces $L_i \subset \bbP^n$
with no inclusions $L_i \subset L_j$ for $i \neq j$.
There are many relations among the algebraic properties of the defining ideal $I_\cA$ of the arrangement,
the combinatorial type of the arrangement, and the geometry of the arrangement itself.
See for example the very recent survey article \cite{MR3051389}
on commutative algebra and subspace arrangements.

Following \cite{MR2537025} we consider the \defining{incidence graph} $\Gamma(\cA)$ of a subspace arrangement $\cA$,
defined as the graph with vertex set $\cA$ and an edge $XY$ for $X, Y \in \cA$ if and only if the intersection $X \cap Y$
has greater than expected dimension.
Thus, for example, for an arrangement of lines in $\bbP^3$, the incidence graph simply records which of the lines meet;
for an arrangement of $2$-planes in $\bbP^4$, the incidence graph records which planes meet along lines, and so on.

Plane arrangements whose incidence graph is a Petersen graph are studied in \cite{MR2537025}.
They are shown to link to surfaces with interesting geometric properties such as multisecant lines.
The presence of a multisecant line intersecting a variety $d$ times
indicates a generator of degree at least $d$ in the defining ideal of the variety,
so the variety has Castelnuovo--Mumford regularity at least $d$.
At the same time, for purposes of liaison theory it is natural to study
whether a subspace arrangement is locally or even arithmetically Cohen--Macaulay.

Specifying $\Gamma(\cA)$ usually does not determine the Castelnuovo--Mumford regularity $\reg \cA$,
although it might bound it.
For example, any given finite set of $n$ points in $\bbP^2$, with no three collinear,
can be constructed as a hyperplane section of a line arrangement $\cA$ in $\bbP^3$ having a path of length $n$ as its incidence graph.
The regularity of $\cA$ is equal to the regularity of its hyperplane section (\cite[Prop.~20.20]{eisenbud:comm-alg}),
which is at most $n-1$ but depends on the position of the $n$ points.

We show, however, that when $\cA$ is an arrangement of $(n-2)$-planes in $\bbP^n$
and the incidence graph of $\cA$ is a complete bipartite graph $K_{a,b}$ of type $(a,b)$
then, for sufficiently large values of $a, b$,
the regularity $\reg \cA$ is uniquely determined.

An upper bound on $\reg \cA$ is known.
Indeed, Derksen and Sidman showed in \cite{MR1942401}
that if $\cA$ is an arrangement of linear subspaces,
then $\reg \cA \leq |\cA|$.
Therefore, in the case where $\Gamma(\cA) \cong K_{a,b}$,
we have $\reg I_{\cA} \leq a + b$.
Even better,
Giaimo~\cite{MR2171233} showed that for a reduced connected nondegenerate curve $C \subset \bbP^n$,
$\reg C \leq \deg C - n + 2$
(this generalizes the case of an integral nondegenerate curve,
treated in~\cite{MR704401}).
In our setting, if $\cA \subset \bbP^3$ is a line arrangement with $\Gamma(\cA) \cong K_{a,b}$,
this gives $\reg \cA \leq a+b - 1$.
Our main result shows that for most $a,b$, the actual regularity is lower than these upper bounds.

\begin{theorem}\label{thm: regularity}
Let $\cA$ be an arrangement of $(n-2)$-planes in $\bbP^n$ with incidence graph $\Gamma(\cA) \cong K_{a,b}$,
a complete bipartite graph of type $(a,b)$.
Suppose $a \leq b \leq 2$, $2 \leq a \leq b \leq 3$, or $3 \leq a \leq b$.
Then the defining ideal $I_\cA$ of the arrangement has regularity
$\reg(I_\cA) = \max(a+1,b)$.
\end{theorem}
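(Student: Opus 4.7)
The plan is to reduce to lines in $\bbP^3$ via generic hyperplane sections, and then use the classical fact that three pairwise skew lines in $\bbP^3$ lie on a unique smooth quadric to identify $\cA$ with a bidegree $(a,b)$ divisor on a quadric surface.

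First I would show that for a generic hyperplane $H \subset \bbP^n$, the section $\cA \cap H$ is an arrangement of $(n-3)$-planes in $\bbP^{n-1}$ whose incidence graph is still $K_{a,b}$: an edge $XY$ with $\dim(X \cap Y) = n-3$ yields $\dim(X \cap Y \cap H) = n-4$, exceeding the expected $n-5$ in $\bbP^{n-1}$, while non-edges remain non-edges by genericity. Since regularity is preserved under generic hyperplane sections (\cite[Prop.~20.20]{eisenbud:comm-alg}), iterating $n-3$ times reduces the theorem to the case $n=3$.

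For the main case $a,b \geq 3$: choose three pairwise skew $A$-lines; they lie on a unique smooth quadric $Q$. Each $B$-line meets all three and therefore lies on $Q$ in the opposite ruling; by the symmetric argument every $A$-line also lies on $Q$. Thus $\cA$ is a divisor of bidegree $(a,b)$ on $Q \cong \bbP^1 \times \bbP^1$. From the ideal-sheaf sequence
\[
0 \to \cO_{\bbP^3}(-2) \to \cI_\cA \to \cO_Q(-a,-b) \to 0,
\]
together with the vanishing $H^i(\cO_{\bbP^3}(k)) = 0$ for $0 < i < 3$, I get $H^i(\cI_\cA(k)) \cong H^i(\cO_Q(k-a,k-b))$ for $i=1,2$. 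The K\"unneth formula on $\bbP^1 \times \bbP^1$ then identifies, under $a \leq b$, the non-vanishing ranges $H^2 \neq 0 \iff k \leq a-2$ and $H^1 \neq 0 \iff a \leq k \leq b-2$. The smallest $m$ for which $H^1(\cI_\cA(m-1)) = H^2(\cI_\cA(m-2)) = 0$ is exactly $m = \max(a+1,b)$, giving both the upper and lower bound on $\reg(I_\cA)$.

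The remaining cases $(a,b) \in \{(1,1), (1,2), (2,2), (2,3)\}$ I would dispatch by direct computation: in explicit coordinates each $I_\cA$ is a short list of low-degree forms whose minimal free resolution may be written down, and the regularity read off. For example, $(1,2)$ produces the edge ideal of the path $P_4$, which is linearly resolved of regularity $2$ by Fr\"oberg's theorem since the complement of $P_4$ is chordal. The hardest step will be the geometric content of Step~2: verifying that the purely combinatorial $K_{a,b}$ hypothesis really forces all of $\cA$ onto a common smooth quadric (no degenerate configurations slipping through), so that the clean bidegree $(a,b)$ model on $\bbP^1 \times \bbP^1$ is valid for the cohomology calculation; once that is secured, the K\"unneth computation and the finite small-case verifications are routine.
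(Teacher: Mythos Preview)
Your core argument---place the line configuration on a smooth quadric via three skew lines, then run the ideal-sheaf sequence and K\"unneth on $\bbP^1\times\bbP^1$---is exactly what the paper does, and your cohomology computation matches the paper's Theorem~\ref{thm: regularity of line arrangement}. Two points where your route diverges from the paper are worth noting.

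\emph{Reduction to $\bbP^3$.} You slice by generic hyperplanes and invoke preservation of regularity. The paper instead proves a structural fact (Lemma~\ref{single point}): under the $K_{a,b}$ hypothesis with $a,b\geq 2$, the $(n-2)$-planes already share a common $(n-4)$-plane, so $\cA$ is literally a cone over a line arrangement $\cB\subset\bbP^3$ with the same incidence graph. Both approaches justify $\reg\cA=\reg\cB$ via the same citation, but the cone statement is intrinsically stronger and costs only a short intersection-dimension argument.

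\emph{Small cases.} You propose ad hoc computations for $(a,b)\in\{(1,1),(1,2),(2,2),(2,3)\}$. This works, but you should say why a single explicit model suffices---i.e., why any line arrangement with that incidence graph is projectively equivalent to your chosen one (or at least has the same regularity). The paper avoids this entirely: Lemma~\ref{lines lie on quadrics} observes that for $a\leq b\leq 3$ such an arrangement still sits on a smooth quadric (trivially for $b\leq 2$; for $(2,3)$ the three skew $B$-lines determine the quadric and the two $A$-lines are forced onto it). Then the same K\"unneth computation handles every case uniformly, with no separate verifications needed.
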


In addition we determine when these arrangements are arithmetically Cohen--Macaulay.
Again, specifying $\Gamma(\cA)$ usually does not determine whether $\cA$ is arithmetically Cohen--Macaulay;
see \cite[Example 2.5]{MR3003727}.
However when the incidence graph of $\cA$ is a complete bipartite graph we are able to show the following.

\begin{theorem}\label{thm: acm}
Let $\cA$ be an arrangement of $(n-2)$-planes in $\bbP^n$
with $\Gamma(\cA) \cong K_{a,b}$
where $a \leq b \leq 2$, $2 \leq a \leq b \leq 3$, or $3 \leq a \leq b$.
Then $\cA$ is arithmetically Cohen--Macaulay if and only if $b=a$ or $b=a+1$.
\end{theorem}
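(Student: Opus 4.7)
The plan is to use general hyperplane sections to reduce to the base case $n = 3$, and then to identify the arrangement as a divisor of bidegree $(a,b)$ on a smooth quadric surface. A general hyperplane $H \subset \bbP^n$ meets $\cA$ transversely and a direct dimension count shows that the incidences are preserved exactly, so $\cA \cap H$ is again a $K_{a,b}$ arrangement, now of $(n-3)$-planes in $\bbP^{n-1}$; iterating, we reach a $K_{a,b}$ line arrangement in $\bbP^3$. Since arithmetic Cohen--Macaulayness descends to general hyperplane sections (a general linear form is a nonzerodivisor on the pure-dimensional $S/I_\cA$), the ``not ACM'' direction follows by induction once the base case $b \geq a+2$ is settled in $\bbP^3$.

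For the base case in $\bbP^3$ with $3 \leq a \leq b$, the $a$ lines on one side of the bipartition are pairwise skew (non-adjacent in $\Gamma(\cA)$), hence lie on a unique smooth quadric $Q$; each of the $b$ lines on the opposite side meets three pairwise skew lines on $Q$, so it meets $Q$ in three points and must itself lie on $Q$. Thus $\cA$ is a divisor of bidegree $(a,b)$ on $Q \cong \bbP^1 \times \bbP^1$. The short exact sequence
\begin{equation*}
0 \to \cO_{\bbP^3}(-2) \to \cI_\cA \to \cI_{\cA/Q} \to 0,
\end{equation*}
together with $H^i(\bbP^3, \cO(m)) = 0$ for $0 < i < 3$, gives $H^1_*(\cI_\cA) \cong H^1_*(\cI_{\cA/Q}) \cong \bigoplus_t H^1(Q, \cO_Q(t-a,\,t-b))$. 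A K\"unneth computation on $\bbP^1 \times \bbP^1$ shows this graded module is nonzero precisely for $a \leq t \leq b-2$, hence vanishes iff $b - a \leq 1$. Since $\cA$ is pure of codimension $2$, it is ACM iff $H^1_*(\cI_\cA) = 0$, completing the main base case. The remaining small cases $(a,b) \in \{(0,0),(0,1),(0,2),(1,1),(1,2),(2,2),(2,3)\}$ I would verify individually by direct Hilbert-function computation or by writing out explicit short free resolutions.

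For the lift from $n=3$ back up to arbitrary $n$ when $b \in \{a, a+1\}$, I would use the long exact sequence associated with $0 \to \cI_\cA(t-1) \to \cI_\cA(t) \to \cI_{\cA \cap H}(t) \to 0$: vanishing of $H^i_*(\cI_{\cA \cap H})$ and $H^{i+1}_*(\cI_{\cA \cap H})$ makes multiplication by the hyperplane form an isomorphism on $H^{i+1}_*(\cI_\cA)$, and combined with the regularity bound $\reg(I_\cA) = \max(a+1,b)$ from Theorem~\ref{thm: regularity}---which forces this module to vanish in sufficiently high degree---a descending induction on $t$ yields $H^{i+1}_*(\cI_\cA) = 0$ in all degrees. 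Iterating this for $1 \leq i \leq n-2$ and inducting on $n$ proves ACMness.

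The main obstacle will be the inductive lifting step: ACMness in dimension $n$ requires vanishing of \emph{all} intermediate cohomology $H^i_*(\cI_\cA)$ for $1 \leq i \leq n-2$, not only $H^1$, and the inductive hypothesis must be packaged so that the regularity bound of Theorem~\ref{thm: regularity} provides uniform control across every $i$ and every stage of the induction on $n$. A secondary technical point is verifying that a general hyperplane section genuinely retains the $K_{a,b}$ incidence structure with the \emph{same} parameters $a,b$---no collapsing of subspaces and no spurious new incidences---but this follows from a standard Bertini-type argument.
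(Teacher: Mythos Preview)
Your computation in the base case $n=3$ is essentially the paper's: identify $\cA$ as a divisor of type $(a,b)$ on a smooth quadric $Q$, push the ideal-sheaf sequence onto $Q$, and read off $H^1_*(\cI_\cA)$ via K\"unneth. Where you diverge is in the passage between $\bbP^n$ and $\bbP^3$. You cut by successive general hyperplanes and then lift ACMness back up by a cohomological induction using the regularity bound. The paper instead proves (Lemma~\ref{single point}) that under the stated hypotheses $\bigcap \cA$ is an $(n-4)$-plane, so $\cA$ is \emph{globally a cone} over a line arrangement $\cB\subset\bbP^3$ with the same incidence graph. Coning corresponds to adjoining polynomial variables to the homogeneous coordinate ring, and Cohen--Macaulayness is preserved in both directions under such an extension (e.g.\ \cite[Prop.~18.9]{eisenbud:comm-alg}). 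This replaces your entire inductive lifting step with a one-line appeal, and it also makes the separate treatment of the small $(a,b)$ cases unnecessary: Lemma~\ref{lines lie on quadrics} already places those on a quadric, so the K\"unneth computation handles them uniformly.

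Your route is workable but carries two loose ends worth tightening. First, your lifting argument as stated claims that vanishing of $H^i_*$ and $H^{i+1}_*$ of the section makes multiplication by $H$ an \emph{isomorphism} on $H^{i+1}_*(\cI_\cA)$; for $i=0$ this fails since $H^0_*(\cI_{\cA\cap H})\neq 0$. What you actually get for $H^1_*(\cI_\cA)$ is surjectivity of multiplication (from $H^1_*(\cI_{\cA\cap H})=0$), and you then need vanishing in \emph{low} degree---which does hold, since $H^0(\cO_\cA(t))=0$ for $t<0$---rather than the high-degree vanishing coming from regularity. Second, verifying that a general hyperplane section retains the $K_{a,b}$ incidence type is genuine work (you must rule out both collapses and new incidences in every dimension down to $3$); the cone description sidesteps this entirely, since the quotient by $\bigcap\cA$ is canonical rather than generic.
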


This was already shown for lines in $\bbP^3$ in \cite{MR778460}, by more algebraic methods.

The idea is to reduce to the case of line arrangements,
then use the special geometry of that setting.
In particular, such a line arrangement will lie on a quadric surface.

For both theorems the omitted values are $a=1$ and $b \geq 3$, or $a=2$ and $b \geq 4$.  
In these cases, we are not able to determine the regularity of $I_{\cA}$
or the arithmetic Cohen--Macaulayness of $\cA$
from $\Gamma(\cA)$ alone, as such arrangements are not guaranteed to reduce to line arrangements on a quadric surface.

Line arrangements on quadric surfaces have appeared in many papers;
in addition to \cite{MR778460,MR3003727} we mention \cite{MR708333},
where these arrangements are used to show that general lines
impose independent conditions on the hypersurfaces containing them.

We use $\cA$ to denote both an arrangement (finite collection of subspaces)
and the projective variety represented by that arrangement (the union of those subspaces).
We assume that the arrangement is defined over the ground field,
in the sense that each subspace in $\cA$ is defined over the field.
Other than this, the field is arbitrary.

\section{Plane arrangements}

For any arrangement $\cA$ with $|\cA| > 1$, the quotient by $\bigcap \cA$ expresses $\cA$
as a cone over an arrangement $\cB$ in a possibly lower-dimensional space.

\begin{lemma}\label{single point}
Let $\cA$ be an arrangement of $(n-2)$-planes in $\bbP^n$.
Suppose $\Gamma(\cA) \cong K_{a,b}$ with $2 \leq a \leq b$.
Then $\bigcap \cA$ is an $(n-4)$-plane.
\end{lemma}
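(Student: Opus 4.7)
The plan is to exploit the bipartition $\cA = \cA_1 \sqcup \cA_2$ determined by $\Gamma(\cA) \cong K_{a,b}$, where $|\cA_1| = a$, $|\cA_2| = b$, and both are at least $2$. The absence of edges within each part forces any two distinct members of $\cA_1$ (or of $\cA_2$) to meet in the expected dimension $n-4$, while the presence of every cross-edge, together with distinctness, forces each $L \in \cA_1$ and each $M \in \cA_2$ to meet in an $(n-3)$-plane.

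The central step is to prove that for every $L \in \cA_1$ and every two distinct $M, M' \in \cA_2$, one has $M \cap M' \subseteq L$. Inside the $(n-2)$-plane $L$, the intersections $L \cap M$ and $L \cap M'$ are two hyperplanes. If they coincided, then $L \cap M$ would sit inside both $M$ and $M'$, giving $\dim(M \cap M') \geq n-3$ and contradicting the non-edge between $M$ and $M'$ in $\Gamma(\cA)$. So $L \cap M \neq L \cap M'$, and their intersection inside $L$ has dimension exactly $n-4$; but this intersection equals $L \cap (M \cap M')$. Since $M \cap M'$ is itself an $(n-4)$-plane, the equality of dimensions forces $M \cap M' \subseteq L$. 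By the symmetric argument, $L \cap L' \subseteq M$ for every distinct pair in $\cA_1$ and every $M \in \cA_2$.

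To conclude, I fix $L_1, L_2 \in \cA_1$ and $M_1, M_2 \in \cA_2$ and chain the containments
\[
\bigcap \cA_1 \;\subseteq\; L_1 \cap L_2 \;\subseteq\; \bigcap \cA_2 \;\subseteq\; M_1 \cap M_2 \;\subseteq\; \bigcap \cA_1.
\]
The outer inclusions are tautologies, while the two middle ones come from applying the central step with every member of the opposite part of the bipartition. Thus all four terms coincide with $L_1 \cap L_2$, and $\bigcap \cA = (\bigcap \cA_1) \cap (\bigcap \cA_2) = L_1 \cap L_2$ is an $(n-4)$-plane.

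The main obstacle is the central containment $M \cap M' \subseteq L$. It uses the non-adjacency inside $\cA_2$ (to separate the two hyperplanes of $L$) together with the adjacency across the parts (to guarantee those hyperplanes attain the full dimension $n-3$, rather than something smaller that would spoil the dimension comparison). The rest is bookkeeping on dimensions; the low-dimensional case $n=3$ is a degenerate check where ``$(n-4)$-plane'' is interpreted as the empty set.
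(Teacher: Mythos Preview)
Your proof is correct and follows essentially the same approach as the paper: both arguments fix an element of one part, observe that the cross-intersections with two elements of the other part are distinct hyperplanes (else the non-adjacency in that part would be violated), and conclude by dimension comparison that the pairwise intersection from the other part lies inside the fixed element. The only cosmetic difference is that the paper works inside an element of $\cA_2$ to show $X \cap Y \subseteq U$, whereas you work inside an element of $\cA_1$ to show $M \cap M' \subseteq L$; the concluding chain of inclusions is the same.
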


\begin{proof}
By definition, there exist disjoint $\cA_1$ and $\cA_2$
such that $\cA = \cA_1 \cup \cA_2$,
$|\cA_1| = a$, $|\cA_2| = b$,
and, for all $X,Y \in \cA$,
\begin{itemize}
\item if $X \in \cA_1$ and $Y \in \cA_2$, then $\dim(X\cap Y)=n-3$, and
\item if $X,Y \in \cA_i$ for $i=1,2$, then $\dim(X\cap Y)=n-4$.
\end{itemize}

Let $X,Y \in \cA_1$ be distinct and let $Z=X\cap Y$.
Let $U \in \cA_2$.
Since $\dim X \cap Y = n-4$, $U \cap X$ and $U \cap Y$ must be distinct $(n-3)$-planes lying inside $U$,
so $U\cap X$ and $U\cap Y$ intersect in some $(n-4)$-plane.
This $(n-4)$-plane must be $Z$, so $Z\subset U$
and hence $Z \subset \bigcap \cA_2$.
On the other hand, as $|\cA_2| \geq 2$ and any two subspaces in $\cA_2$ intersect in an $(n-4)$-plane,
$\bigcap \cA_2$ has dimension at most $n-4$;
thus $\bigcap \cA_2 = Z = X \cap Y \supseteq \bigcap \cA_1$.

By the same argument, $\bigcap \cA_1 = U \cap V$ for any $U, V \in \bigcap \cA_2$.
It follows that $\bigcap \cA = \bigcap \cA_1 = \bigcap \cA_2 = Z$, an $(n-4)$-plane.
\end{proof}

In the above situation, then, $\cA$ is a cone over an arrangement $\cB$ of lines in $\bbP^3$,
with vertex an $(n-4)$-plane.
We have that $\Gamma(\cB) \cong \Gamma(\cA) \cong K_{a,b}$.
Indeed, if $X, Y \in \cA$ correspond to lines $x, y \in \cB$
(so that $X = x + \bigcap \cA$, $Y = y + \bigcap \cA$ as linear subspaces)
then $X$ and $Y$ are adjacent in $\Gamma(\cA)$ if and only if $x$ and $y$ are adjacent in $\Gamma(\cB)$.

If $a \leq b = 2$ then again $\bigcap \cA$ is an $(n-4)$-plane.
(If $a = b = 2$ the above lemma applies. If $a = 1$, $b = 2$ then $\bigcap \cA = \bigcap \cA_2$ is an $(n-4)$-plane.)
If $a = b = 1$ then $\bigcap \cA$ is an $(n-3)$-plane
and quotienting by any $(n-4)$-dimensional subspace expresses $\cA$ as a cone over
a line arrangement in $\bbP^3$ consisting of two lines through a point.
(While this line arrangement is again a cone and we could go down one dimension further,
we choose to work with line arrangements.)
This proves the following.

\begin{lemma}\label{lem: cone over line arrangement}
If $\cA$ is an arrangement of $(n-2)$-planes in $\bbP^n$
and $\Gamma(\cA) \cong K_{a,b}$ with $a \leq b \leq 2$ or $2 \leq a \leq b$
then $\cA$ is a cone over an arrangement $\cB$ of lines in $\bbP^3$
with $\Gamma(\cB) \cong K_{a,b}$.
\end{lemma}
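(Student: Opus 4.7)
The plan is to produce a linear subspace $\Lambda$ of dimension $n-4$ that is contained in every member of $\cA$, project everything by $\Lambda$, and check that the resulting line arrangement in $\bbP^n/\Lambda \cong \bbP^3$ has the same incidence graph. Most of the argument will be in exhibiting $\Lambda$; the rest is a dimension count.

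First I would dispatch the existence of $\Lambda$ by cases on the hypothesis. When $2 \leq a \leq b$, Lemma~\ref{single point} directly yields $\bigcap \cA$ as an $(n-4)$-plane, and I set $\Lambda := \bigcap \cA$. When $a=1$ and $b=2$, let $X$ be the lone member of $\cA_1$ and $Y,Z$ the two members of $\cA_2$. Since $Y$ and $Z$ are non-adjacent, $\dim(Y\cap Z)=n-4$, the expected dimension. Since $X$ is adjacent to each of $Y$ and $Z$, the intersections $X\cap Y$ and $X\cap Z$ are two distinct $(n-3)$-planes inside the $(n-2)$-plane $X$, and so they intersect in an $(n-4)$-plane; this $(n-4)$-plane must coincide with $Y\cap Z$, giving $Y\cap Z \subset X$ and hence $\bigcap \cA = Y \cap Z$, an $(n-4)$-plane, which I take as $\Lambda$. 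When $a=b=1$, the two adjacent $(n-2)$-planes meet in an $(n-3)$-plane, and I simply choose any $(n-4)$-dimensional subspace of this intersection to be $\Lambda$.

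With $\Lambda$ fixed, every $L \in \cA$ contains $\Lambda$, so projection modulo $\Lambda$ sends each $L$ to a well-defined line $\ell \subset \bbP^n/\Lambda \cong \bbP^3$. Let $\cB$ be the resulting collection of lines; since $\cA$ has no inclusions, no two of its members project to the same line, so $|\cB|=|\cA|$. For distinct $L_i, L_j \in \cA$ both containing $\Lambda$, passing to the quotient gives
\[
\dim(L_i \cap L_j) \;=\; \dim(\ell_i \cap \ell_j) + \dim \Lambda + 1 \;=\; \dim(\ell_i \cap \ell_j) + n - 3.
\]
The expected intersection dimensions are $n-4$ in $\bbP^n$ and $-1$ in $\bbP^3$, so $L_i \cap L_j$ is larger than expected if and only if $\ell_i \cap \ell_j$ is larger than expected. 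Therefore $\Gamma(\cB) \cong \Gamma(\cA) \cong K_{a,b}$, as required.

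The only subtle step is the $a=1$, $b=2$ case, which is not covered by Lemma~\ref{single point}; there one must argue, as above, that the two $(n-3)$-planes $X \cap Y$ and $X \cap Z$ sitting inside $X$ force $Y \cap Z$ to lie in $X$. The cases $2 \leq a \leq b$ are immediate from the previous lemma, and $a=b=1$ is essentially tautological since the required $(n-4)$-plane can simply be chosen inside the intersection $X \cap Y$.
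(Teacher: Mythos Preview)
Your proof is correct and follows essentially the same approach as the paper: split into the cases $2 \leq a \leq b$ (invoke Lemma~\ref{single point}), $a=1,b=2$ (show $Y\cap Z \subset X$), and $a=b=1$ (choose any $(n-4)$-plane inside the intersection), then quotient by the resulting $\Lambda$ and check that incidences are preserved. Your treatment of the $a=1,b=2$ case is in fact more detailed than the paper's, which simply asserts $\bigcap\cA = \bigcap\cA_2$ without spelling out why $\bigcap\cA_2 \subset X$.
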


\begin{lemma}\label{lines lie on quadrics}
If $\cA = \cA_1 \cup \cA_2$ is a complete bipartite arrangement of lines in $\bbP^3$,
$\Gamma(\cA) \cong K_{a,b}$ with $3 \leq a \leq b$ or $a \leq b \leq 3$,
then there is a smooth quadric surface $Q \subset \bbP^3$
such that $\cA$ lies on $Q$.
Specifically, for each $i = 1,2$, the lines of $\cA_i$ lie in one of the two rulings of $Q$.
\end{lemma}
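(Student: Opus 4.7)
The plan is to exploit the classical rigidity of three pairwise skew lines in $\bbP^3$: such a triple lies on a unique quadric surface, which is necessarily smooth, and any fourth line meeting all three must lie on that quadric as a line of the opposite ruling (because the restriction of the defining quadratic form to such a line vanishes at three points, hence vanishes identically).

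The first step is to note that two lines belonging to the same part $\cA_i$ are non-adjacent in the incidence graph $\Gamma(\cA) \cong K_{a,b}$, so their intersection has the expected dimension $-1$ in $\bbP^3$; thus the lines within each $\cA_i$ are pairwise skew. For the main range $3 \leq a \leq b$, I would choose three pairwise skew lines $\ell_1, \ell_2, \ell_3 \in \cA_1$, let $Q$ be the unique smooth quadric they determine (with $\ell_1, \ell_2, \ell_3$ in one ruling), and observe that every $m \in \cA_2$ meets all three $\ell_i$ by completeness of the bipartite graph, forcing $m \subset Q$ as a line in the opposite ruling. Since $b \geq 3$, three pairwise skew lines from $\cA_2$ now let me run the symmetric argument, placing every remaining line of $\cA_1$ on $Q$. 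The ruling statement is automatic: on a smooth quadric two distinct lines are skew exactly when they share a ruling and meet exactly when they lie in opposite rulings, which matches the bipartition $\cA = \cA_1 \cup \cA_2$.

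For the range $a \leq b \leq 3$, the subcases $(1,3), (2,3), (3,3)$ are handled identically by selecting three pairwise skew lines from $\cA_2$. The remaining cases $(1,1), (1,2), (2,2)$ are the main obstacle, since no triple of pairwise skew lines is available and the three-skew-lines rigidity cannot be invoked. For these I would produce a smooth quadric through the configuration directly. A linear-systems count is available: quadrics in $\bbP^3$ form a $\bbP^9$, and each line imposes $3$ linear conditions, reduced by one for each previously included line it already meets, so in each of these configurations the quadrics containing the given lines form a positive-dimensional linear system. Smoothness holds on a nonempty Zariski-open subset, and nonemptiness is witnessed by an explicit model such as $\{xy+zw=0\}$ together with appropriate lines in its two rulings, after which transitivity of $\mathrm{PGL}_4$ on the combinatorial type $K_{a,b}$ for $(a,b) \in \{(1,1),(1,2),(2,2)\}$ (four points in general linear position in the $K_{2,2}$ case, and even simpler incidence data in the other two) transports the smooth quadric to the general configuration. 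Once $Q$ is in hand, the skewness within and incidences between the two parts again dictate the ruling assignment.
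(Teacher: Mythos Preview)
Your proof is correct and follows essentially the same approach as the paper: for the main range $3\le a\le b$ both arguments select three pairwise skew lines in $\cA_1$, pass to the unique smooth quadric $Q$ they determine, absorb $\cA_2$ into $Q$ via the three-point-incidence trick, and then use $b\ge 3$ to absorb the remaining lines of $\cA_1$; the cases with $b=3$ are handled symmetrically in both. The only divergence is in the residual cases $a\le b\le 2$: the paper dismisses these as ``trivial'' (and indeed, after moving the intersection points to coordinate points one writes down a pencil of quadrics through the configuration and picks a smooth member), whereas you supply a $\mathrm{PGL}_4$-transitivity argument reducing to an explicit model on $\{xy+zw=0\}$. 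Your argument there is sound---in particular your observation that the four intersection points in the $K_{2,2}$ case are in general linear position is exactly what makes the transitivity go through---but it is more machinery than the situation demands, and the accompanying linear-systems count is not actually used once you invoke transitivity.
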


\begin{proof}
Suppose $3 \leq a \leq b$.
Let $X, Y, Z \in \cA_1$ be distinct skew lines.
There is a unique smooth quadric surface $Q$ containing $X, Y, Z$ as lines in one of its
rulings, see \cite[Example 8.36]{Harris}.
(Briefly, $Q$ contains a line $L$ if and only if $Q$ contains three points on $L$;
thus the containment of each of the lines $X, Y, Z$ imposes three conditions on $Q$,
for a total of $9$ conditions in the $10$-dimensional space of quadratic forms on $\bbP^3$.)
Then each line $L \in \cA_2$ lies in $Q$, as it meets $X, Y, Z$,
hence has three points in common with $Q$.
Finally then each line $M \in \cA_1$ lies in $Q$, as it meets each of the lines in $\cA_2$,
giving $b \geq 3$ points in common with $Q$.

That each line in $\cA_1$ meets each line in $\cA_2$ means they lie in opposite rulings of
$Q \cong \bbP^1 \times \bbP^1$;
that each pair of lines in $\cA_1$ (or similarly, $\cA_2$) is skew means they lie in the same ruling.

A similar argument works if $a \leq b = 3$.
The claim is trivial if $a \leq b \leq 2$.
\end{proof}

\section{Regularity}

Recall the following definition.
\begin{definition}
The \defining{(Castelnuovo-Mumford) regularity} of a sheaf $\cF$ on $\bbP^n$ is
\begin{equation*}
\reg \cF = \min \{ d : H^i(\bbP^n,\cF(d-i))=0 \: \forall i>0 \} .
\end{equation*}
For a subvariety $A \subset \mathbb \bbP^n$ we denote by $\reg A$ the regularity $\reg I_A$ of the ideal sheaf of $A$.
In particular, for an arrangement $\cA$ we simply write $\reg \cA$
for the regularity of the defining ideal of the arrangement.

For a comprehensive introduction to this topic, see for example \cite[\textsection 20.5]{eisenbud:comm-alg}
or \cite[\textsection 1.8]{pag1}.
\end{definition}

Suppose $\cA$ is an $(n-2)$-plane arrangement in $\bbP^n$ with $\Gamma(\cA) \cong K_{a,b}$.
As mentioned in the introduction, upper bounds on $\reg \cA$ are known.
We have $\reg \cA \leq a+b$ by a result of Derksen and Sidman \cite{MR1942401},
and indeed $\reg \cA \leq a+b-1$ by a result of Giaimo~\cite{MR2171233},
but these upper bounds, which do not take into account the special geometry
of line arrangements lying on quadric surfaces, are not sharp.

General lower bounds for regularity seem to be less well known.
If $\cA$ is a line arrangement in $\bbP^3$ consisting of $a$ lines in one ruling
of a smooth quadric and $b$ lines in the other ruling,
then $\reg \cA \geq \max\{a,b\}$.
Indeed, a line on the quadric surface meets either $a$ or $b$ of the lines of $\cA$.
Therefore the defining ideal $\cI_\cA$ has a minimal generator in degree at least $\max\{a,b\}$.
(We thank Jessica Sidman for pointing out to us this observation.)
However, even this lower bound, taking into account the special geometry of $\cA$,
is still not sharp.

\begin{theorem} \label{thm: regularity of line arrangement}
Suppose $\cA$ is a line arrangement in $\bbP^3$
such that all lines in $\cA$ lie in a smooth quadric surface $Q$,
$a$ lines of $\cA$ lie in one of the rulings of $Q$,
$b$ lines of $\cA$ lie in the other ruling, and $a \leq b$.
Then $\reg \cA = \max\{a+1,b\}$.
\end{theorem}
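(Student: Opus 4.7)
The plan is to reduce the regularity computation to a cohomology calculation on $Q$. Writing $\cA = \cA_1 \cup \cA_2$ with $a$ lines in one ruling and $b$ lines in the other, the curve $\cA$ is an effective Cartier divisor of bidegree $(a,b)$ on $Q \cong \bbP^1 \times \bbP^1$, so the ideal of $\cA$ inside $Q$ is $\cO_Q(-a,-b)$. Combined with $\cI_{Q/\bbP^3} = \cO_{\bbP^3}(-2)$, this gives the short exact sequence on $\bbP^3$
\begin{equation*}
0 \to \cO_{\bbP^3}(-2) \to \cI_\cA \to i_* \cO_Q(-a,-b) \to 0,
\end{equation*}
where $i \colon Q \hookrightarrow \bbP^3$ is the inclusion.

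Twisting by $\cO_{\bbP^3}(k)$ and taking the long exact sequence in cohomology, the vanishing of $H^1$ and $H^2$ of every twist of $\cO_{\bbP^3}$ yields isomorphisms
\begin{equation*}
H^j(\bbP^3, \cI_\cA(k)) \cong H^j(Q, \cO_Q(k-a, k-b)) \quad \text{for } j = 1,2.
\end{equation*}
The right-hand sides are computed by K\"unneth on $\bbP^1 \times \bbP^1$: $H^1(\cO_Q(p,q))$ is nonzero exactly when one of $p,q$ is $\geq 0$ and the other $\leq -2$, while $H^2(\cO_Q(p,q))$ is nonzero exactly when both $p,q \leq -2$. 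Plugging in $(p,q) = (k-a, k-b)$ and using $a \leq b$, one finds that $H^2(\cI_\cA(k)) = 0$ if and only if $k \geq a-1$, and $H^1(\cI_\cA(k)) = 0$ if and only if $k \leq a-1$ or $k \geq b-1$. The remaining cohomology $H^3(\cI_\cA(k))$ vanishes for $k \geq -3$ since $\cA$ has dimension one.

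To extract the regularity, I would seek the smallest $m$ for which $H^j(\cI_\cA(m-j)) = 0$ for every $j \geq 1$. The $j=2$ condition requires $m \geq a+1$, which forces $m - 1 \geq a$ and so rules out the lower branch $m - 1 \leq a-1$ of the $H^1$ vanishing; the $H^1$ condition therefore reduces to $m \geq b$. Combining the two bounds yields $\reg \cA = \max(a+1, b)$, as claimed.

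I expect the main subtlety to be justifying cleanly that the ideal of $\cA$ inside $Q$ is the line bundle $\cO_Q(-a,-b)$ despite the nodes where lines from opposite rulings meet: as an effective Cartier divisor on the smooth surface $Q$, $\cA$ decomposes as the sum of its components with multiplicity one and hence has class $(a,b)$ in the Picard group of $Q$. Once that identification is in hand, the remainder of the argument is straightforward K\"unneth bookkeeping.
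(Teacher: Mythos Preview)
Your argument is correct and is essentially the paper's own proof: the same short exact sequence $0 \to \cO_{\bbP^3}(-2) \to \cI_\cA \to \cO_Q(-a,-b) \to 0$, followed by K\"unneth on $\bbP^1 \times \bbP^1$ and the same case analysis for $H^1$ and $H^2$. The only nitpick is that the $j=2$ isomorphism you assert also uses $H^3(\cO_{\bbP^3}(k-2))=0$, which holds only for $k \geq -1$; this is harmless in the range relevant to the regularity (and the paper makes the analogous restriction explicit), and your inclusion of the $H^3(\cI_\cA)$ check is in fact slightly more careful than the paper's write-up.
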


\begin{proof}
Let $\cI_\cA \subset \cO_{\bbP^3}$ be the defining ideal sheaf of $\cA$ in $\bbP^3$
and let $\cI_{\cA,Q} \subset \cO_Q$ be the defining ideal sheaf of $\cA$ as a subvariety of $Q$.
We have the exact sequence
\begin{equation*}
0 \rightarrow \cO_{\bbP^3}(-2)
  \overset{\cdot Q}{\longrightarrow} \cI_{\cA}
  \longrightarrow \cI_{\cA,Q}
  \rightarrow 0
\end{equation*}
By hypothesis, $\cI_{\cA,Q}\cong \cO_{\bbP^1\times\bbP^1}(-a,-b)$.  
For $i \geq 1$ and $d \geq i-1$, $H^i(\bbP^3,\cO_{\bbP^3}(d-i-2)) = 0$, so
\begin{equation*}
\begin{split}
H^i(\bbP^3,\cI_{\cA}(d-i))
  &\cong H^i(\bbP^1 \times \bbP^1,\cO_{\bbP^1 \times \bbP^1}(d-i-a,d-i-b)) \\
  &\cong \bigoplus_{j+k=i} H^j(\bbP^1,\cO_{\bbP^1}(d-i-a)) \otimes H^k(\bbP^1,\cO_{\bbP^1}(d-i-b))
\end{split}
\end{equation*}

Now $H^1(\bbP^3,\cI_{\cA}(d-1))=0$
if and only if $d-1-a < 0$ or $d-1-b > -2$, and $d-1-a > -2$ or $d-1-b < 0$;
this is equivalent to $d\leq a$ or $d\geq b$.
And $H^2(\bbP^3,\cI_{\cA}(d-2))=0$
if and only if $d-2-a > -2$ or $d-2-b > -2$;
since $a \leq b$, this is equivalent to
$d\geq a+1$.
\end{proof}

Note that in the case $a=b$, $\cA$ is a complete intersection of type $(2,a)$,
which already implies $\reg \cA = a+1$ \cite[Example~1.8.27]{pag1}.

This result generalizes to higher dimensions,
yielding the statement given in the Introduction.

\begin{proof}[Proof of Theorem \ref{thm: regularity}]
Suppose $a \leq b \leq 2$ or $2 \leq a \leq b \leq 3$ or $3 \leq a \leq b$.
By Lemma~\ref{lem: cone over line arrangement} $\cA$ is a cone over a line arrangement $\cB$ in $\bbP^3$
with $\Gamma(\cB) \cong K_{a,b}$.
By Lemma~\ref{lines lie on quadrics} $\cB$ lies on a quadric, thus satisfies
the hypotheses of Theorem~\ref{thm: regularity of line arrangement}.
Hence $\reg \cB = \max\{a+1,b\}$.
Since $\cA$ is a cone over $\cB$,
we have $\reg \cA = \reg \cB$.
(See, for example, \cite[Prop.~20.20]{eisenbud:comm-alg}: the regularity of $\cA$
is equal to the regularity of its linear section $\cB$.)
\end{proof}

\begin{example}
If $X, Y$ are projective varieties such that $X \cap Y$ is zero-dimensional,
then $\reg X \cup Y \leq \reg X + \reg Y$,
by a result of Caviglia (Corollary~3.4 in \cite{MR2299466}).
We can easily give examples in which equality occurs.
Let $\cA$ be a line arrangement in $\bbP^3$ with $\Gamma(\cA) \cong K_{a_1 , a_2}$,
lying on a smooth quadric surface $Q$.
Let $\cA = \cB \cup \cC$, where $\Gamma(\cB) \cong K_{b_1 , b_2}$,
$\Gamma(\cC) \cong K_{c_1 , c_2}$, $a_i = b_i + c_i$ for $i = 1,2$.
Theorem~\ref{thm: regularity of line arrangement} applies to $\cA$, $\cB$, and $\cC$
since they lie on $Q$;
and the intersection $\cB \cap \cC$ is zero dimensional.
Then $\reg \cA = \reg \cB + \reg \cC$ if and only if one of the following cases occurs:
$b_1 > b_2$ and $c_1 > c_2$ (so $\reg \cA = a_1 = b_1 + c_1 = \reg \cB + \reg \cC$);
or similarly $b_1 < b_2$ and $c_1 < c_2$;
or $b_1 = b_2 + 1$ and $c_2 = c_1 + 1$ (so $a_1 = a_2$, $\reg \cA = a_1 + 1 = b_1 + c_2 = \reg \cB + \reg \cC$);
or similarly $b_2 = b_1 + 1$ and $c_1 = c_2 + 1$.

Cones over these arrangements give examples for which $\reg \cB \cup \cC = \reg \cB + \reg \cC$
while $\cB \cap \cC$ is positive-dimensional.
\end{example}

We can also now prove the condition given in the Introduction determining when
these arrangements are arithmetically Cohen--Macaulay.

\begin{proof}[Proof of Theorem \ref{thm: acm}]
First suppose $\cA$ is a line arrangement in $\bbP^3$ lying on a smooth quadric surface,
with $\Gamma(\cA) \cong K_{a,b}$, $a \leq b$.
The computation in the proof of Theorem~\ref{thm: regularity of line arrangement}
shows that if $a < d < b$, then $H^1(\bbP^3, \cI_\cA(d-1)) \neq 0$.
Thus if $b \geq a+2$ ($\cA$ is ``unbalanced''),
then $\cA$ is not projectively normal
and not arithmetically Cohen--Macaulay.

Conversely, if $b=a$ or $b=a+1$ then the same computation shows
$H^1(\bbP^3, \cI_\cA(d-1)) = 0$ for all $d$,
so $\cA$ is arithmetically Cohen--Macaulay.

In higher dimensions, if $\cA$ is an $(n-2)$-arrangement in $\bbP^n$ with $\Gamma(\cA) \cong K_{a,b}$
with $a \leq b \leq 2$, $2 \leq a \leq b \leq 3$, or $3 \leq a \leq b$,
then $\cA$ is a cone over a line arrangement $\cB \subset \bbP^3$ lying on a quadric with $\Gamma(\cB) \cong K_{a,b}$,
and so $\cA$ is arithmetically Cohen--Macaulay if and only if $\cB$ is (essentially by \cite[Prop.~18.9]{eisenbud:comm-alg}), if and only if $b=a$ or $b=a+1$.
\end{proof}

\begin{remark}
We can do slightly better in $\bbP^3$.
If $\cA$ is a line arrangement in $\bbP^3$ with $\Gamma(\cA) \cong K_{a,b}$
where $a \leq b \leq 3$ or $3 \leq a \leq b$, then
$\cA$ lies on a quadric surface (Lemma~\ref{lines lie on quadrics}),
so $\reg \cA = \max\{a+1,b\}$ (Theorem~\ref{thm: regularity of line arrangement})
and $\cA$ is arithmetically Cohen--Macaulay if and only if $b=a$ or $b=a+1$ (same proof as Theorem~\ref{thm: acm}).

This simply adds the case $(a,b) = (1,3)$ to the list of cases already given in
Theorems~\ref{thm: regularity} and~\ref{thm: acm}.
\end{remark}

%\bibliography{references}{}
\bibliographystyle{amsplain}

\providecommand{\bysame}{\leavevmode\hbox to3em{\hrulefill}\thinspace}
\renewcommand{\MR}[1]{}
% \MRhref is called by the amsart/book/proc definition of \MR.
%\providecommand{\MRhref}[2]{%
%  \href{http://www.ams.org/mathscinet-getitem?mr=#1}{#2}
%}
%\providecommand{\href}[2]{#2}

\bigskip

\end{document}